\newcommand{\cdummy}{\cdot}
\newcommand{\nobracket}{}
\newcommand{\nocomma}{}
\newcommand{\tmSep}{; }
\newcommand{\tmdummy}{$\mbox{}$}
\newcommand{\tmem}[1]{{\em #1\/}}
\newcommand{\tmmathbf}[1]{\ensuremath{\boldsymbol{#1}}}
\newcommand{\tmop}[1]{\ensuremath{\operatorname{#1}}}
\newcommand{\tmtextbf}[1]{{\bfseries{#1}}}
\newcommand{\tmtextit}[1]{{\itshape{#1}}}
\newenvironment{enumeratenumeric}{\begin{enumerate}[1.] }{\end{enumerate}}
\newenvironment{enumerateroman}{\begin{enumerate}[i.] }{\end{enumerate}}
\newenvironment{itemizeminus}{\begin{itemize} }{\end{itemize}}
\theoremstyle{plain}
\newtheorem{definition}{Definition}
\theoremstyle{remark}
\newtheorem{example}{Example}
\theoremstyle{plain}
\newtheorem{proposition}{Proposition}
\theoremstyle{remark}
\newtheorem{remark}{Remark}
\theoremstyle{plain}
\newtheorem{theorem}{Theorem}
\begin{document}

\

\

\title{Witt rings of quadratically presentable fields}

\author{Paweł Gładki}
\address{Institute of Mathematics, University of Silesia\\
Department of Computer Science, AGH}
\email{pawel.gladki@us.edu.pl{\tmSep}
}

\author{Krzysztof Worytkiewicz}
\address{Université Savoie-MtBlanc}
\email{krzysztof.worytkiewicz@univ-smb.fr{\tmSep}
}

\begin{abstract}
  This paper introduces an approach to the axiomatic theory of quadratic forms
  based on {\tmem{presentable}} partially ordered sets, that is partially
  ordered sets subject to additional conditions which amount to a strong form
  of local presentability. It turns out that the classical notion of the Witt
  ring of symmetric bilinear forms over a field makes sense in the context of
  {\tmem{quadratically presentable fields}}, that is fields equipped with a
  presentable partial order inequationaly compatible with the algebraic
  operations. In particular, Witt rings of symmetric bilinear forms over
  fields of arbitrary characteristics are isomorphic to Witt rings of suitably
  built quadratically presentable fields.
\end{abstract}

{\maketitle}

\section{Introduction}

In this work we approach the axiomatic theory of quadratic forms by
generalising the underlying principles of hyperrings {\cite{Mar06}} to certain
partial orders we call {\tmem{presentable}}. Roughly speaking, presentable
posets generalise the behaviour of {\tmem{pierced powersets}}, that is
powersets excluding the empty set with order given by inclusion. The most
salient order-theoretic feature of pierced powersets is that they exhibit a
generating set of minimal elements, since a non-empty set is a union of
singletons. It is precisely this feature which is captured in the definition
of presentable posets. The objective here is to build an axiomatic theory of
quadratic forms by describing the behaviour of their value sets. Although we
do not address this question explicitly in the present paper, the main
motivation underlying this construction is concerned with the concrete problem
of assigning a $K$-theory spectrum to the category of hypermodules over a
hyperring and, in turn, introducing $K$-theoretic methods to the axiomatic
theory of quadratic forms.

In Section 2 we formally introduce presentable posets and provide some \
examples including the set of integers greater or equal $1$ augmented with a
point at infinity and ordered by division, as well as the set of proper ideals
of a Noetherian ring reversely ordered by inclusion.

In Sections 3 and 4 we introduce presentable monoids, groups, rings and
fields. In particular, we exhibit presentable groups, rings and fields arising
in a natural way from hypergroups, hyperrings and hyperfields, respectively.
This provides the main link between our theory and already existing axiomatic
theories of quadratic forms. A word of caution might be in order here as far
as the terminology is concerned: a presentable group is not a group, not even
a cancellative monoid. We just choose to stick to established terminology. A
similar comment can be made about the notion of hypergroup underlying the
notion of hyperring {\cite{Mar06}}. On the other hand, the Witt rings we
construct are rings without further ado.

In Section 5 we define pre-quadratically and quadratically presentable fields
which share certain similarities with groups of square classes of fields,
endowed with partial order and addition. We then exhibit a Witt ring structure
naturally occuring in quadratically presentable fields. As an application, for
every field one can form a hyperfield by defining on the multiplicative group
of its square classes multivalued addition that corresponds to value sets of
binary forms. The presentable field induced by this hyperfield is
quadratically presentable, and its Witt ring in our sense is isomorphic to its
standard Witt ring. What makes this construction of interest is the fact that
it uniformely works for fields of both characteristic $2$ and $\neq 2$. It is
technically reminiscent of the one used by Dickmann and Miraglia to build Witt
rings of special groups {\cite{DicMir00}}.

In Sections 6 and 7 we explain how a pre-quadratically presentable field can
be obtained from any presentable field. For that purpose we introduce in
Section 6 quotients of presentable fields, the quotienting being performed
with respect to the multiplicative structure. In Section 7 we use these
quotients in order to build pre-quadratically presentable fields from
presentable fields. The techniques here heavily rely on the connection between
presentable algebras and hyperalgebras.

\section{Presentable posets}

Recall that a partially-ordered set or {\tmem{poset}} is a set equipped with a
reflexive, transitive and anti-symmetric relation. Let $A$ be a poset. We
shall write $\bigvee X$ for the supremum of $X \subseteq A$ and $x \vee y$ for
$\bigvee \{x, y\}$. We shall denote by $\mathcal{S}_A$ the set of $A$'s
minimal elements, by $\mathcal{S}_a \overset{}{}$ the set of all minimal
elements below $a \in A$, and by $\mathcal{S}_X \overset{\tmop{def} .}{=}
\bigcup_{x \in X} \mathcal{S}_x$ the set of minimal elements below $X
\subseteq A$.

\begin{definition}
  A poset $A$ is {\tmem{weakly presentable}} if
  \begin{enumerateroman}
    \item every nonempty subset $S \subseteq \mathcal{S}_A$ has a supremum;
    
    \item $x = \bigvee \mathcal{S}_x$ for every $x \in A$.
  \end{enumerateroman}
\end{definition}

\begin{remark}
  Let $A$ be a weakly presentable poset.
  \begin{enumeratenumeric}
    \item Any non-empty subset $X \subseteq A$ has a supremum. To be more
    specific, $\bigvee X = \bigvee \bigcup_{x \in X} \mathcal{S}_x$.
    
    \item $x \leqslant y \Longleftrightarrow \mathcal{S}_x \subseteq
    \mathcal{S}_y$ for all $x, y \in A$. 
  \end{enumeratenumeric}
\end{remark}

\begin{definition}
  Let $A$ be a poset and $Y \subseteq A$ a non-empty subset. An element $x \in
  A$ is {\tmem{compact}} if $x \leqslant \bigvee Y$ entails that there is an
  element $y \in Y$ such that $x \leqslant y$.
\end{definition}

\begin{remark}
  This definition of (order-theoretic) compactness is more general than the
  standard one, which only takes suprema of {\tmem{directed}} subsets into
  account.
\end{remark}

\begin{definition}
  A poset $X$ equipped with a distinguished element $\wp_X$ is called
  {\tmem{pointed}}. $\wp_X$ is called {\tmem{basepoint}}.
\end{definition}

\begin{definition}
  A pointed poset $A$ is {\tmem{presentable}} if
  \begin{enumerateroman}
    \item $A$ is weakly presentable;
    
    \item $\wp_A$ is minimal;
    
    \item every $a \in \mathcal{S}_A$ is compact.
  \end{enumerateroman}
  We shall call the minimal elements of a presentable poset
  {\tmem{supercompacts}}.
\end{definition}

\begin{proposition}
  \label{prop:unique}Let $A$ be a weakly presentable poset. The following are
  equivalent.
  \begin{enumerateroman}
    \item Every minimal element is compact;
    
    \item given $x \in A$, if $x = \bigvee S$ for some $S \subseteq
    \mathcal{S}_A$ then $S =\mathcal{S}_x$.
  \end{enumerateroman}
\end{proposition}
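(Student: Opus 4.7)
The plan is to prove the two implications separately, using the remark that for any non-empty $X \subseteq A$ one has $\bigvee X = \bigvee \mathcal{S}_X$, together with the observation that if $s \leqslant s'$ and both $s, s'$ are minimal in $A$, then $s = s'$ (immediate from the definition of minimality).

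For the implication (i) $\Rightarrow$ (ii), I would fix $x \in A$ and $S \subseteq \mathcal{S}_A$ with $x = \bigvee S$, and prove the set equality $S = \mathcal{S}_x$ by showing both inclusions. The inclusion $S \subseteq \mathcal{S}_x$ is straightforward: every $s \in S$ is minimal by hypothesis and satisfies $s \leqslant \bigvee S = x$. For the reverse inclusion, take $s \in \mathcal{S}_x$; since $s$ is minimal, assumption (i) makes it compact, so from $s \leqslant x = \bigvee S$ I obtain an element $s' \in S \subseteq \mathcal{S}_A$ with $s \leqslant s'$. Minimality of $s'$ then forces $s = s'$, whence $s \in S$.

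For the implication (ii) $\Rightarrow$ (i), I would fix a minimal element $m \in \mathcal{S}_A$ and a non-empty $Y \subseteq A$ with $m \leqslant \bigvee Y$, and produce $y \in Y$ with $m \leqslant y$. Setting $z := \bigvee Y$ and invoking the remark gives $z = \bigvee \mathcal{S}_Y$, where $\mathcal{S}_Y \subseteq \mathcal{S}_A$. Hypothesis (ii) applied to this presentation of $z$ yields $\mathcal{S}_Y = \mathcal{S}_z$. Since $m$ is minimal and $m \leqslant z$, we have $m \in \mathcal{S}_z = \mathcal{S}_Y = \bigcup_{y \in Y} \mathcal{S}_y$, so $m \in \mathcal{S}_y$ for some $y \in Y$, i.e.\ $m \leqslant y$, as required.

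I do not foresee a genuine technical obstacle: both directions reduce to unpacking definitions once one has the remark at hand. The only slightly delicate point is the use of minimality to collapse $s \leqslant s'$ to $s = s'$ in the forward direction; this must be flagged explicitly since in a poset with a basepoint one might otherwise expect minimal elements to sit above $\wp_A$, but the statement here concerns a weakly presentable poset, where no such complication arises.
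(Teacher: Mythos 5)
Your proof is correct and follows essentially the same route as the paper's: compactness plus minimality of elements of $S$ gives $\mathcal{S}_x \subseteq S$ in the forward direction, and rewriting $\bigvee Y$ as $\bigvee \mathcal{S}_Y$ and applying (ii) gives the converse. The only differences are cosmetic — you argue the forward inclusion directly rather than by contradiction, and you make explicit the step $m \in \mathcal{S}_z = \mathcal{S}_Y$ that the paper compresses into a single ``hence''.
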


\begin{proof}
  {\color[HTML]{800000}({\tmem{i.$\Rightarrow$ii.}}):} Let $x \in A$ with $x =
  \bigvee S$, for some $S \subseteq \mathcal{S}_A$. Clearly $S \subseteq
  \mathcal{S}_x$. Suppose that, for some $a \in \mathcal{S}_x$, $a \notin S$.
  But $a \leq x = \bigvee S$, so, by compactness of $a$, $a \leq b$, for some
  $b \in S$. As $b$ is a minimal element, this yields $a = b$, which leads to
  a contradiction.
  
  ({\tmem{ii.$\Rightarrow$i.}}): Let $a \in \mathcal{S}_A$ and assume that $a
  \leq \bigvee Y$, for some $Y \subseteq A$. Thus $a \leq \bigvee \bigcup_{y
  \in Y} \mathcal{S}_y$ and hence $a \in \bigcup_{y \in Y} \mathcal{S}_y$, say
  $a \leq y$ for some $y \in Y$.
\end{proof}

\begin{example}[Walking supremum]
  Perhaps the smallest example of a non-discrete presentable poset is the
  poset $\mathcal{W} (x, p)$ given by
  
  \begin{center}
    \raisebox{-0.5\height}{\includegraphics{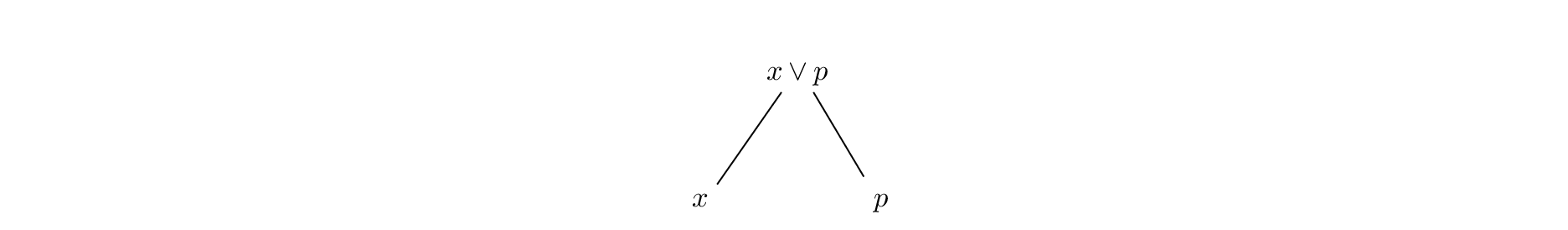}}
  \end{center}
  
  {\noindent}with $p = \wp_{\mathcal{W} (x, p)}$. We call a presentable poset
  of this type {\tmem{walking supremum}}.
\end{example}

\begin{definition}
  Let $X$ be a set. The set $\mathcal{P}^{\ast} (X) \overset{\tmop{def} .}{=}
  2^X \backslash \{ \varnothing \}$ is called {\tmem{pierced powerset}} of
  $X$.
\end{definition}

\begin{example}
  \label{ex:ppset}A pierced powerset $(\nobracket \mathcal{P}^{\ast} (X)$
  together with a distinguished point is a presentable poset when ordered by
  inclusion, with supercompacts the singletons.
\end{example}

\begin{example}
  The set $\mathcal{Z_{}}_{\geq 2}^{\infty}$ of greater and equal than $2$ and
  square-free integers with a point at infinity added is a presentable poset
  with respect to the ordering by division. Prime numbers are the
  supercompacts. Clearly $\mathcal{Z}_{\geq 2}^{\infty}$ here can be replaced
  with conjugation classes of square-free non-units of any unique
  factorization domain with classes of nonzero irreducibles as supercompacts.
\end{example}

\begin{example}
  The set $(\mathcal{I}^{\ast} (R), \supseteq)$ of all ideals of an absolutely
  flat (i.e. von Neumann regular) Noetherian ring $R$ is a presentable poset
  with respect to the ordering by reverse inclusion. The primary ideals in
  absolutely flat rings are maximal, and so is the trivial ideal $R$, hence
  they constitute the supercompacts. Indeed, every element of
  $\mathcal{I}^{\ast} (R)$ is either contained in a maximal ideal (which, in
  particular, is primary), or is equal to $R$. Every proper ideal is an
  intersetion of some primary ideals due to the Noether-Lasker theorem, and,
  clearly, an intersection of any family of proper ideals is a proper ideal.
  Clearly, this example can be also phrased in the language of affine
  algebraic sets that also satisfy some extra conditions.
\end{example}

\section{Presentable groups}

\begin{definition}
  A {\tmem{presentable monoid}} $(M, \leq, 0, +)$ is a presentable poset $(M,
  \leq, 0)$ with a distinguished supercompact $0$ and a suprema-preserving
  binary addition $+ : M \times M \rightarrow M$ such that
  \begin{enumerateroman}
    \item $a + (b + c) = (a + b) + c$ for all $a, b, c \in M$;
    
    \item $a + 0 = 0 + a = a$ for all $a \in M$;
    
    \item $a + b = b + a$ for all $a, b \in M$.
  \end{enumerateroman}
\end{definition}

\begin{remark}
  {\tmdummy}
  
  \begin{enumeratenumeric}
    \item The addition is in particular monotone, that is
    \[ (a \leq b) \wedge (c \leq d) \Rightarrow (a + c \leq b + d) \]
    for all $a, b, c, d \in M$.
    
    \item Suppose $a \leq b + c$. Let $s \in \mathcal{S}_a$. We have
    \begin{eqnarray*}
      s & \leqslant & a\\
      & \leqslant & b + c\\
      & = & \bigvee \mathcal{S}_b + \bigvee \mathcal{S}_c\\
      & = & \bigvee \{t + u|t \in \mathcal{S}_b, u \in \mathcal{S}_c \}
    \end{eqnarray*}
    Hence by Proposition \ref{prop:unique} there are $t \in \mathcal{S}_b$ and
    $u \in \mathcal{S}_c$ such that
    \begin{eqnarray*}
      s & \leqslant & t + u
    \end{eqnarray*}
  \end{enumeratenumeric}
\end{remark}

\begin{example}
  \label{ex-acc-mon}Let $(M, 0, +)$ be a commutative monoid. The pointed
  pierced powerset
  \[ (\mathcal{P}^{\ast} (M), \subseteq, \{ 0 \}) \]
  (c.f. Example \ref{ex:ppset}) is a presentable monoid with addition given by
  \[ \begin{array}{lll}
       \tmmathbf{+} : \mathcal{P}^{\ast} (M) \times \mathcal{P}^{\ast} (M) &
       \longrightarrow & \mathcal{P}^{\ast} (M)\\
       (A, B) & \mapsto & \{a + b \mid a \in A, b \in B\}
     \end{array} \]
  The singleton $\{ 0 \}$ is the neutral element. The addition preserves
  suprema:
  \begin{eqnarray*}
    \left( \bigcup_{i \in I} A_i \right) + \left( \bigcup_{j \in J} B_j
    \right) & = & \bigcup_{i \in I, j \in J} (A_i + B_j)
  \end{eqnarray*}
  We shall abuse the notation by using the same symbol $+$ for addition $+$ in
  $M$ and $\tmmathbf{+} $in $\mathcal{P}^{\ast} (M)$
\end{example}

\begin{definition}
  A \tmtextit{{\tmem{hypermonoid}}} is a pointed set $(M, 0, +)$ equipped with
  a multivalued addition
  \[ + : M \times M \rightarrow \mathcal{P}^{\ast} (M) \]
  such that
  \begin{enumerateroman}
    \item $a + 0 = a = 0 + a$ for all $a \in M$;
    
    \item $a + b = b + a$ for all $a, b \in M$;
    
    \item $(a + b) + c = \bigcup \{a + x|x \in b + c\} = a + (b + c)$ for all
    $a, b, c \in M$.
  \end{enumerateroman}
\end{definition}

\begin{remark}
  \label{ex}Let $(M, 0, +)$ be a hypermonoid. The pointed pierced powerset
  ($\mathcal{P}^{\ast} (M), \subseteq, \{ 0 \}$) is a presentable monoid with
  addition given by
  \[ \begin{array}{lll}
       \tmmathbf{+} : \mathcal{P}^{\ast} (M) \times \mathcal{P}^{\ast} (M) &
       \longrightarrow & \mathcal{P}^{\ast} (M)\\
       (A, B) & \mapsto & \bigcup \{a + b |a \in A, b \in B\}
     \end{array} \]
  Again, in what follows we shall use the same symbol $+$ for addition $+$ in
  $M$ and $\tmmathbf{+}$ in $\mathcal{P}^{\ast} (M)$.
\end{remark}

\begin{definition}
  A {\tmem{presentable group}} $G$ is a presentable monoid equipped with a
  suprema preserving involution $- : G \rightarrow G$ called
  {\tmem{inversion}}, verifying
  \[ (s \leq t + u) \Rightarrow (t \leq s + (- u)) \]
  for all $s, t, u \in \mathcal{S}_G$.
\end{definition}

\begin{remark}
  Assume a presentable group $(G, \leqslant, 0, +, -)$.
  \begin{enumeratenumeric}
    \item Notice that the inversion is, in particular, monotone, so we have
    quite counterintuitively
    \[ (a \leq b) \Rightarrow (- a \leq - b) \]
    for all $a, b \in G$.
    
    \item We have $0 \leqslant s + (- s)$, for all $s \in \mathcal{S}_G$,
    since $s \leqslant 0 + s$ implies that $0 \leqslant s + (- s)$. This
    entails that, in fact
    \[ 0 \leqslant a + (- a) \]
    for {\tmem{any}} $a \in G$. Since $\mathcal{S}_a \neq \varnothing$ there
    is a supercompact $s \in \mathcal{S}_a$ such that $s \leqslant a$, hence
    \begin{eqnarray*}
      0 & \leqslant & s - s\\
      & \leqslant & a - a
    \end{eqnarray*}
    \item It is in general not true that $a \leq b + c$ implies $b \leq a - c$
    for arbitrary $a, b, c \in G$. Take the presentable group
    $(\mathcal{P}^{\ast} (\mathbbm{Z}), \subseteq, \{0\}, +)$, where
    $\mathbbm{Z}$ is endowed with the usual addition. Then
    \[ \{1, 3\} \subseteq \{0, 1\} + \{0, 2\} = \{0, 1, 2, 3\} \]
    but
    \[ \{0, 1\} \nsubseteq \{1, 3\} - \{0, 2\} = \{1, - 1, 3\} \]
  \end{enumeratenumeric}
\end{remark}

\begin{example}
  \label{ex-acc-grp}Let $(G, 0, +)$ be an abelian group and denote by $- a$
  the opposite element of $a$ with respect to $+$. The presentable monoid
  $\mathcal{P}^{\ast} (G)$ as defined in Example \ref{ex-acc-mon} is a
  presentable group with inversion given by
  \[ \begin{array}{rcl}
       - : \mathcal{P}^{\ast} (G) & \longrightarrow & \mathcal{P}^{\ast} (G)\\
       A & \mapsto & \{- a|a \in A\}
     \end{array} \]
\end{example}

\begin{definition}
  A \tmtextit{{\tmem{hypergroup}}} $G$ is a hypermonoid together with a map $-
  : G \rightarrow G$ such that
  \begin{enumerateroman}
    \item $0 \in a + (- a)$ for all $a \in G$;
    
    \item $(a \in b + c) \Rightarrow (c \in a + (- b))$ for all $a, b, c \in
    G$.
  \end{enumerateroman}
\end{definition}

\begin{example}
  Let $(G, 0, +, -)$ be a hypergroup. The presentable monoid
  $(\mathcal{P}^{\ast} (G), \subseteq, \{0\}, +)$ is a presentable group with
  inversion given by
  \[ \begin{array}{lll}
       - : \mathcal{P}^{\ast} (G) & \longrightarrow & \mathcal{P}^{\ast} (G)\\
       A & \mapsto & \{- a |a \in A\}
     \end{array} \]
\end{example}

\section{Presentable rings and fields}

\begin{definition}
  A {\tmem{presentable ring}} $R$ is a presentable group $(R, \leq, 0, +, -)$
  as well as a commutative monoid $(R, \cdot, 1)$, such that $\cdot$ is
  compatible with $\leq$ and $-$, distributative with respect to $+$, and
  verifies
  \begin{eqnarray*}
    \mathcal{S}_{a \nocomma b} & = & \{s \nocomma t|s \in \mathcal{S}_a, t \in
    \mathcal{S}_b \}
  \end{eqnarray*}
  for all $a, b \in R$. A presentable ring $R$ such that $\mathcal{S}_R^{\ast}
  = \mathcal{S}_R \setminus \{0\}$ is a multiplicative group will be called a
  \tmtextit{{\tmem{presentable field}}.}
\end{definition}

\begin{example}
  \label{ar-from-r}Let $(R, 0, +, \cdot, 1)$ be a ring. The presentable group
  $\mathcal{P}^{\ast} (R)$ (c.f. Example \ref{ex-acc-grp}) is \ a presentable
  ring with identity $\{1\}$ and with multiplication given by
  \[ A \cdot B \overset{def.}{=} \{a \cdot b |a \in A, b \in B\} \]
  If $R$ is a field, then $\mathcal{P}^{\ast} (R)$ becomes a presentable
  field.
\end{example}

\begin{remark}
  Assume a presentable ring $(R, \leq, 0, +, -, \cdot, 1)$.
  \begin{enumeratenumeric}
    \item The element $1 \in R$ is uniquely defined.
    
    \item $1 \neq 0$.
    
    \item $- 1 \in \mathcal{S}_R$.
  \end{enumeratenumeric}
  Items $1$ and $2$ are immediate. For item $3$ fix $s \in S_{- 1}$. Then $s
  \leq - 1$ and, consequently, $- s \leq 1$. But $1$ is a supercompact, so $-
  s = 1$, hence $- 1 = s \in \mathcal{S}_R$.
\end{remark}

\begin{definition}
  A \tmtextit{{\tmem{hyperring}}} $(R, 0, +, -, 1, \cdot)$ is a hypergroup
  $(R, 0, +, -)$ such that $(R, 1, \cdot)$ is a commutative monoid and
  \begin{enumerateroman}
    \item $0 \cdummy a = 0$ for all $a \in R$;
    
    \item $a \cdummy (b + c) = (a \cdummy b) + (a \cdummy c)$ for all $a, b, c
    \in R$;
    
    \item $0 \neq 1$.
  \end{enumerateroman}
  If, in addition, every non-zero element has a multiplicative inverse, then
  $R$ is called a \tmtextit{hyperfield}.
\end{definition}

\begin{example}
  Let $(R, 0, +, -, 1, \cdummy)$ be a hyperring (or a hyperfield). The
  presentable group $\mathcal{P}^{\ast} (R)$ is a presentable ring (or a
  presentable field, respectively) with multiplication given by
  \[ A \cdot B = \{a \cdummy b |a \in A, b \in B\} \]
  for $A, B \in \mathcal{P}^{\ast} (R)$. The identity is $\{1\}$.
\end{example}

\begin{remark}
  Let $(F, 0, +, -, 1, \cdummy)$ be a hyperfield and $T$ be a subgroup of the
  multiplicative group $(F, 1, \cdummy)$. The relation $\sim$ \ on $F$ given
  by
  \[ x \sim y \overset{\tmop{def} .}{\Longleftrightarrow} x \cdummy s = y
     \cdummy t \quad \text{for some } s, t \in T \]
  is an equivalence. Let $F /_m T$ be its set of equivalence classes and
  $\bar{x}$ the class of $x \in F$. The induced operations
  \begin{itemizeminus}
    \item $\bar{x} \in \bar{y}  \bar{+}  \bar{z}  \overset{\tmop{def}
    .}{\Longleftrightarrow} x \cdummy s \in (y \cdummy t) + (z \cdummy u)
    \quad \text{for some } s, t, u \in T$;
    
    \item $\bar{x}  \bar{\cdummy}  \bar{y} = \overline{\tmop{xy}}$;
    
    \item $\bar{-}  \bar{x} = \overline{- x} .$
  \end{itemizeminus}
  are well-defined and $(F /_m T, \bar{0}, \bar{+}, \bar{-}, \bar{1},
  \bar{\cdummy})$ is \ a hyperfield that we shall call the \tmtextit{quotient
  hyperfield} of $F$ modulo $T$ {\cite{Mar06}}. Whenever clear from the
  context, we shall use the same symbols for $+$, $-$ and $\cdot$ both in $F$
  and $F /_m T$.
\end{remark}

\begin{example}
  \label{quad}Let $k$ be a field with $\tmop{char} (k) \neq 2$ and $k \neq
  \mathbbm{F}_3, \mathbbm{F}_5$. This yields an example of a hyperfield with
  $a + b = \{a + b\}$. Let $T = k^{\ast 2}$ be $k$'s {\tmem{multiplicative
  group of squares}} and $x, y, z \in k$. It is easy to see that the following
  are equivalent
  \begin{enumerateroman}
    \item $x = s^2 y + t^2 z \text{ for some } s, t \in k$;
    
    \item $\bar{x} \in \bar{y} + \bar{z}  \text{ in } k /_m k^{\ast 2}$.
  \end{enumerateroman}
  We thus have $\bar{y} + \bar{z} = D (y, z) \cup \{ \bar{0} \},$ where $D (y,
  z)$ is the value set of the binary quadratic form $(y, z)$.
  
  Assume now $\tmop{char} (k) = 2$ or $k \in \{ \mathbbm{F}_3, \mathbbm{F}_5
  \}$. In this case the above assertions are in general not equivalent
  anymore. However, $k /_m k^{\ast 2}$ with a modified addition given by
  \[ \bar{y} +'  \bar{z} = \left\{ \begin{array}{lll}
       \bar{y} + \bar{z} &  & \text{if } \bar{y} = \bar{0}  \text{or } \bar{z}
       = \bar{0}\\
       \bar{y} + \bar{z} \cup \{ \bar{y}, \bar{z} \} &  & \text{if } \bar{y}
       \neq \bar{0}, \bar{z} \neq \bar{0}, \bar{y} \neq - \bar{z}\\
       k /_m k^{\ast 2} &  & \text{if } \bar{y} \neq \bar{0}, \bar{z} \neq
       \bar{0}, \bar{y} = - \bar{z}
     \end{array} \right. \]
  is again a hyperfield. Observe that this addition is well-defined for any
  hyperfield {\cite[Proposition 2.1]{GlaMar17}}, we just have $+' = +$
  whenever $\tmop{char} (k) \neq 2$ and $k \neq \mathbbm{F}_3, \mathbbm{F}_5$.
\end{example}

\begin{definition}
  Let $k$ be a field. $Q (k) \overset{\tmop{def} .}{=} (k /_m k^{\ast 2}, +')
  $is called {\tmem{$k$'s quadratic hyperfield}}.
\end{definition}

\begin{example}
  \label{sq}Let $k$ be a field with two square classes, for instance when $k$
  is {\tmem{formally real}}. The two square classes are represented by $1, -
  1$, so that $k$ is Euclidean (for example, $k = \mathbb{R}$, or the field of
  real algebraic numbers, or the field of real constructible numbers etc), and
  $Q (k) = \{ \bar{0}, \bar{1}, \overline{- 1} \}$ with multivalued addition
  given by
  \[ \begin{array}{|c|c|c|c|}
       \hline
       + & \bar{0} & \bar{1} & \overline{- 1}\\
       \hline
       \bar{0} & \bar{0} & \bar{1} & \overline{- 1}\\
       \hline
       \bar{1} & \bar{1} & \bar{1} & \{ \bar{0}, \bar{1}, \overline{- 1} \}\\
       \hline
       \overline{- 1} & \overline{- 1} & \{ \bar{0}, \bar{1}, \overline{- 1}
       \} & \overline{- 1}\\
       \hline
     \end{array} \]
  along with the obvious multiplication. The presentable ring
  $\mathcal{P}^{\ast} (Q (k))$ with identity $I$ consists of 7 elements
  \[ \begin{array}{llll}
       \theta = \{ \bar{0} \} & I = \{ \bar{1} \} & \kappa =\{- 1\} & \beta =
       \{ \bar{0}, \bar{1}, \overline{- 1} \}\\
       \alpha_1 = \{ \bar{0}, \bar{1} \} & \alpha_2 = \{ \bar{0}, \overline{-
       1} \} & \alpha_3 = \{ \bar{1}, \overline{- 1} \} & 
     \end{array} \]
  with arithmetic given by
  \[ \begin{array}{|c|c|c|c|c|c|c|c|}
       \hline
       + & \theta & I & \kappa & \alpha_1 & \alpha_2 & \alpha_3 & \beta\\
       \hline
       \theta & \theta & I & \kappa & \alpha_1 & \alpha_2 & \alpha_3 & \beta\\
       \hline
       I & I & I & \beta & I & \beta & \beta & \beta\\
       \hline
       \kappa & \kappa & \beta & \kappa & \beta & \kappa & \beta & \beta\\
       \hline
       \alpha_1 & \alpha_1 & I & \beta & \alpha_1 & \beta & \beta & \beta\\
       \hline
       \alpha_2 & \alpha_2 & \beta & \kappa & \beta & \alpha_2 & \beta &
       \beta\\
       \hline
       \alpha_3 & \alpha_3 & \beta & \beta & \beta & \beta & \beta & \beta\\
       \hline
       \beta & \beta & \beta & \beta & \beta & \beta & \beta & \beta\\
       \hline
     \end{array} \text{\quad } \begin{array}{|c|c|c|c|c|c|c|c|}
       \hline
       & \theta & I & \kappa & \alpha_1 & \alpha_2 & \alpha_3 & \beta\\
       \hline
       \theta & \theta & \theta & \theta & \theta & \theta & \theta & \theta\\
       \hline
       I & \theta & I & \kappa & \alpha_1 & \alpha_2 & \alpha_3 & \beta\\
       \hline
       \kappa & \theta & \kappa & I & \alpha_2 & \alpha_1 & \alpha_3 & \beta\\
       \hline
       \alpha_1 & \theta & \alpha_1 & \alpha_2 & \alpha_1 & \alpha_2 & \beta &
       \beta\\
       \hline
       \alpha_2 & \theta & \alpha_2 & \alpha_1 & \alpha_2 & \alpha_1 & \beta &
       \beta\\
       \hline
       \alpha_3 & \theta & \alpha_3 & \alpha_3 & \beta & \beta & \alpha_3 &
       \beta\\
       \hline
       \beta & \theta & \beta & \beta & \beta & \beta & \beta & \beta\\
       \hline
     \end{array} \]
  The partial order in $\mathcal{P}^{\ast} (Q (k))$ is generated by
  
  \begin{center}
    \raisebox{-0.5\height}{\includegraphics{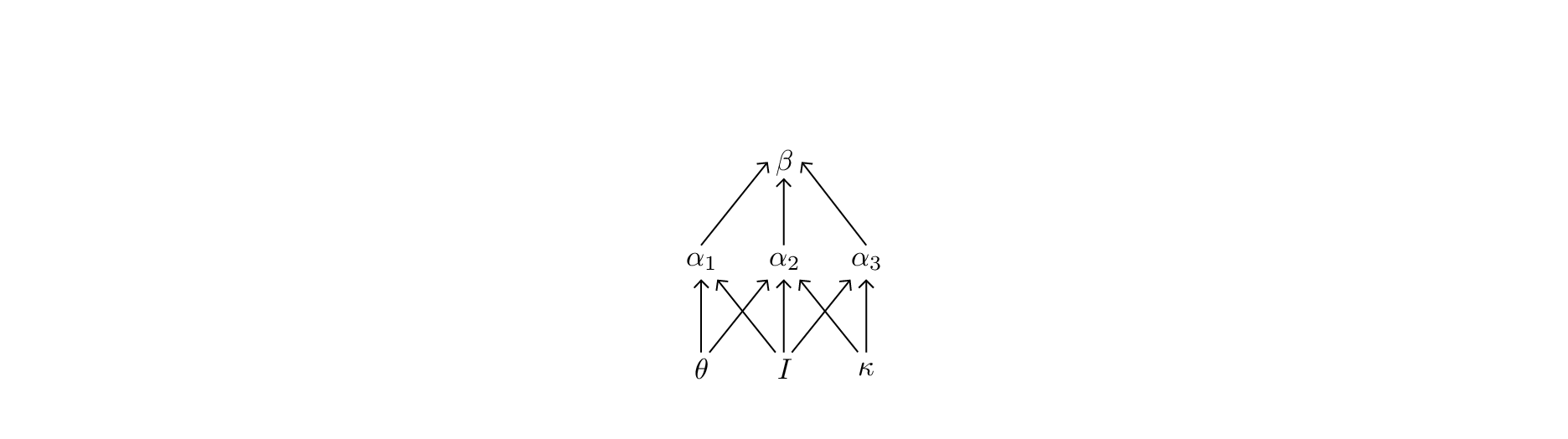}}
  \end{center}
\end{example}

\section{Witt rings of quadratically presentable fields}

\begin{definition}
  Let $(R, \leq, 0, +, -, \cdot, 1)$ be a presentable field. We shall call $R$
  \tmtextit{{\tmem{pre-quadratically presentable}}}, if the following
  conditions hold
  \begin{enumerateroman}
    \item $a \leq a + b$ for all $a \in \mathcal{S}_R^{\ast}, b \in
    \mathcal{S}_R$;
    
    \item $(a \leq 1 - b) \wedge (a \leq 1 - c) \Rightarrow (a \leq 1 - bc)$
    for all $a, b, c \in \mathcal{S}_R$;
    
    \item $a^2 = 1$ for all $a \in \mathcal{S}_R \setminus \{0\}$.
  \end{enumerateroman}
\end{definition}

\begin{remark}
  Note that in the axiom $i$. the assumption that $a \in \mathcal{S}_R^{\ast}$
  is cruicial: if $a = 0$ then $a \leq a + b$ is just $0 \leq 0 + b = b$,
  which means $b = 0$ for all $b \in \mathcal{S}_R$.
\end{remark}

\begin{example}
  Let $k$ be a field, $Q (k)$ be its quadratic hyperfield and
  $\mathcal{P}^{\ast} (Q (k))$ be the induced presentable field (c.f. Examples
  \ref{quad} and \ref{sq}). It is easy to see that this presentable field is
  pre-quadratically presentable.
\end{example}

\begin{example}
  The presentable field $\mathcal{P}^{\ast} (R)$ constructed from a field $(R,
  0, +, \cdot, 1)$ (c.f. Example \ref{ar-from-r}) is usually not
  pre-quadratically presentable, since it is, in general, not true that $\{a\}
  \subset \{a\} + \{b\} = \{a + b\}$.
\end{example}

\begin{definition}
  A \tmtextit{{\tmem{form}}} on a pre-quadratically presentable field $R$ is
  an $n$-tuple $\langle a_1, \ldots, a_n \rangle$ of elements of
  $\mathcal{S}_R^{\ast}$. The relation $\cong$ of \tmtextit{isometry} of forms
  of the same dimension is given by induction:
  \begin{itemizeminus}
    \item $\langle a \rangle \cong \langle b \rangle$ \
    $\overset{def.}{{\Longleftrightarrow}} \ a = b$;
    
    \item $\langle a_1, a_2 \rangle \cong \langle b_1, b_2 \rangle$ \
    $\overset{def.}{{\Longleftrightarrow}} \ a_1 a_2 = b_1 b_2$ and $b_1 \leq
    a_1 + a_2$;
    
    \item $\langle a_1, \ldots, a_n \rangle \cong \langle b_1, \ldots, b_n
    \rangle$ \ $\overset{def.}{{\Longleftrightarrow}}$ \ there exist $x, y, c_3,
    \ldots, c_n \in \mathcal{S}_R^{\ast}$ such that
    \begin{enumerateroman}
      \item $\langle a_1, x \rangle \cong \langle b_1, y \rangle$;
      
      \item $\langle a_2, \ldots, a_n \rangle \cong \langle x, c_3, \ldots,
      c_n \rangle$;
      
      \item $\langle b_2, \ldots, b_n \rangle \cong \langle y, c_3 \ldots, c_n
      \rangle$.
    \end{enumerateroman}
  \end{itemizeminus}
\end{definition}

\begin{proposition}
  The relation $\cong$ is an equivalence on the sets of all unary and binary
  forms of a pre-quadratically presentable field $R$.
\end{proposition}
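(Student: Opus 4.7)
The unary case is immediate since $\cong$ reduces to equality there, so the plan is to concentrate on binary forms, where $\langle a_1, a_2 \rangle \cong \langle b_1, b_2 \rangle$ means $a_1 a_2 = b_1 b_2$ and $b_1 \leq a_1 + a_2$. Reflexivity will be essentially axiom (i) of pre-quadratic presentability applied with $a = a_1 \in \mathcal{S}_R^{\ast}$ and $b = a_2 \in \mathcal{S}_R$. For symmetry I would assume $b_1 \leq a_1 + a_2$ and $a_1 a_2 = b_1 b_2$ and multiply the inequality through by the supercompact $a_1 b_1$, invoking $\leq$-compatibility of $\cdot$. Axiom (iii) then collapses $a_1^2$ and $b_1^2$ to $1$, and $a_1 a_2 b_1 = b_1 b_2 \cdot b_1 = b_2$, so the left-hand side becomes $a_1$ and the right-hand side becomes $b_1 + b_2$, delivering $a_1 \leq b_1 + b_2$.

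Transitivity is the substantive step. Given $b_1 \leq a_1 + a_2$, $c_1 \leq b_1 + b_2$, and $a_1 a_2 = b_1 b_2 = c_1 c_2$, the goal is $c_1 \leq a_1 + a_2$. The idea is to bring both hypotheses into the shape demanded by axiom (ii), apply that axiom exactly once, and then undo the reshaping. Multiplying the first inequality by $a_1$ and the second by $b_1$, and using $a_i^2 = b_i^2 = 1$ together with $b_1 b_2 = a_1 a_2$, will produce $a_1 b_1 \leq 1 + a_1 a_2$ and $b_1 c_1 \leq 1 + a_1 a_2$. Two successive applications of the presentable-group inversion axiom will turn each of these into $- a_1 a_2 \leq 1 - a_1 b_1$ and $-a_1 a_2 \leq 1 - b_1 c_1$ respectively, all terms being supercompacts. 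Axiom (ii) applied with $a = -a_1 a_2$, $b = a_1 b_1$, $c = b_1 c_1$ then yields $-a_1 a_2 \leq 1 - (a_1 b_1)(b_1 c_1) = 1 - a_1 c_1$, the last equality by $b_1^2 = 1$. Reversing the inversion chain gives $a_1 c_1 \leq 1 + a_1 a_2$, and multiplying by $a_1$ delivers $c_1 \leq a_1 + a_2$.

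The hard part is identifying axiom (ii) as the correct hinge: among the three pre-quadratic axioms it is the only one that combines two representation-type inequalities sharing a left-hand supercompact into a single one whose right-hand side carries the multiplied data. The surrounding manipulations are then forced by axiom (iii), which is precisely what makes the product $(a_1 b_1)(b_1 c_1)$ collapse to the supercompact $a_1 c_1$ that must appear on the right of the final inequality; everything else is formal juggling with the inversion axiom of presentable groups.
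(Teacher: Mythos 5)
Your proposal is correct and follows essentially the same route as the paper: reflexivity from axiom (i), symmetry by scaling the defining inequality by a product of supercompacts and collapsing squares via axiom (iii), and transitivity by massaging both hypotheses into the shape $x \leq 1 - y$ (using the exchange law of presentable groups and multiplication by supercompacts) so that axiom (ii) applies, then reversing. The only differences are cosmetic choices of which supercompact to multiply by and which elements to feed into axiom (ii); if anything, your symmetry step lands more directly on the required conclusion $a_1 \leq b_1 + b_2$ than the paper's own computation.
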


\begin{proof}
  The statement is clear for unary forms. For binary forms, reflexivity
  follows from the axiom $(i.)$. For symmetry assume that $\langle a, b
  \rangle \cong \langle c, d \rangle$, for $a, b, c, d \in
  \mathcal{S}_R^{\ast}$. Thus $ab = cd$ and $a \leq c + d$. But then $a =
  bcd$, so that $bcd \leq c + d$. Thus $b \leq cd (c + d) = c + d$. For
  transitivity assume $\langle a, b \rangle \cong \langle c, d \rangle$ and
  $\langle c, d \rangle \cong \langle e, f \rangle$, for $a, b, c, d, e, f \in
  \mathcal{S}_R^{\ast}$. This means $ab = cd$, $cd = ef$, $a \leq c + d$ and,
  by symmetry, $e \leq c + d$. Therefore, $c \leq a - d$ and $c \leq e - d$,
  which gives $- cd \leq 1 - ad$ and $- cd \leq 1 - ed$. By (2) this implies
  $- cd \leq 1 - ae$. Since $cd = ef$, this is just $- ef \leq 1 - ae$, or,
  equivalently, $ef \leq ae - 1$. But this is the same as $ae \leq 1 + ef$, so
  $a \leq e + f$.
\end{proof}

\begin{definition}
  A pre-quadratically presentable field $(R, \leq, 0, +, -, \cdot, 1)$ will be
  called \tmtextit{{\tmem{quadratically presentable}}} if the isometry
  relation is an equivalence on the set of all forms of the same dimension.
\end{definition}

\begin{example}
  The pre-quadratically presentable field $\mathcal{P}^{\ast} (Q (k))$, for a
  field $k$, is quadratically presentable. That $\cong$ is an equivalence
  relation on the set of all forms of the same dimension follows from the
  well-known inductive description of the isometry relation of quadratic
  forms.
\end{example}

\begin{definition}
  Let $R$ be a pre-quadratically presentable field, let $\phi = \langle a_1,
  \ldots, a_n \rangle$, $\psi = \langle b_1, \ldots, b_m \rangle$ be two
  forms. The \tmtextit{orthogonal sum} $\phi \oplus \psi$ is defined as the
  form
  \[ \langle a_1, \ldots, a_n, b_1, \ldots, b_m \rangle \]
  and the \tmtextit{tensor product} $\phi \otimes \psi$ as
  \[ \langle a_1 b_1, \ldots, a_1 b_m, a_2 b_1, \ldots, a_2 b_m, \ldots, a_n
     b_1, \ldots, a_n b_m \rangle \]
  We will write $k \times \phi$ for the form $\underbrace{\phi \oplus \ldots
  \oplus \phi}_{k ~ times}$.
\end{definition}

\begin{proposition}
  {\tmdummy}
  
  \begin{enumeratenumeric}
    \item Let $R$ be a pre-quadratically presentable field. The direct sum and
    the tensor product of isometric forms are isometric.
    
    \item (Witt cancellation) Let $R$ be a quadratically presentable field. If
    $\phi_1 \oplus \psi \cong \phi_2 \oplus \psi$, then $\phi \cong \psi$.
  \end{enumeratenumeric}
\end{proposition}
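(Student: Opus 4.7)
The plan is to treat the two items separately, with item (1) serving as the main tool for item (2).

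For item (1), I would induct on the common dimension $n$ of the isometric forms. The key substep is a \emph{scaling lemma}: $\phi \cong \phi'$ implies $\langle c \rangle \otimes \phi \cong \langle c \rangle \otimes \phi'$ for $c \in \mathcal{S}_R^{\ast}$. In the binary case this is direct from the definition, since $c^2 = 1$ (axiom (iii) of pre-quadratic presentability) preserves the determinant equation $a_1 a_2 = b_1 b_2$, and compatibility of multiplication with $\leqslant$ together with distributivity gives $c b_1 \leqslant c a_1 + c a_2$. For $n \geqslant 3$, I would unpack the witnesses $x, y, c_3, \ldots, c_n$ appearing in the inductive definition of $\phi \cong \phi'$, scale them by $c$, and apply the inductive hypothesis to the two tail isometries. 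The orthogonal-sum statement is a parallel induction: the same chain witnesses extended by the shared tail $\psi$ lift an isometry $\phi \cong \phi'$ to $\phi \oplus \psi \cong \phi' \oplus \psi$, and symmetry in $\psi$ closes the case. The general tensor product then follows by iterating the scaling lemma coordinate-by-coordinate and folding the results with $\oplus$.

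For item (2), I would first reduce Witt cancellation to cancelling one summand at a time by induction on $\dim \psi$, so that the nub is the statement: $\phi_1 \oplus \langle c \rangle \cong \phi_2 \oplus \langle c \rangle$ implies $\phi_1 \cong \phi_2$. Writing $\phi_i = \langle a_1^{(i)}, \ldots, a_n^{(i)} \rangle$ and unfolding the inductive definition for the $(n+1)$-dimensional isometry would produce chain witnesses $x, y, c_3, \ldots, c_{n+1}$. Using item (1) I would try to permute the distinguished $c$ into a canonical position on both sides, and then apply axiom (ii) of pre-quadratic presentability, in the same way it was used in the binary transitivity argument of the previous proposition, to peel off the common $\langle c \rangle$ and recover an isometry of the first $n$ slots.

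The main obstacle I expect is precisely this cancellation step: the inductive definition of $\cong$ records only \emph{one} particular chain of transpositions, so one must show that if such a chain exists for the enlarged $(n+1)$-dimensional forms then it can be rearranged into one whose final slot witnesses the identity on $c$. This is a Witt-style exchange property, and this is where quadratic presentability becomes indispensable: it is the standing assumption that $\cong$ is an equivalence in all dimensions which permits iterating symmetry and transitivity on the sub-isometries extracted from the chain witnesses, something that would fail if one tried to run the same argument under the weaker pre-quadratic hypothesis.
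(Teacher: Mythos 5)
The paper's own proof of this proposition is the single phrase ``induction on the dimension of forms,'' so at the level of strategy your proposal coincides with it; the issue is one concrete step of your sketch that would fail as written. Item (1) is asserted for \emph{pre}-quadratically presentable fields, where $\cong$ in dimension $\geq 3$ is only known to be reflexive and symmetric --- transitivity is exactly the extra hypothesis that distinguishes ``quadratically'' from ``pre-quadratically'' presentable, as you yourself observe at the end. Yet your treatment of the orthogonal sum (``lift $\phi \cong \phi'$ to $\phi \oplus \psi \cong \phi' \oplus \psi$, and symmetry in $\psi$ closes the case'') and of the tensor product (``folding the results with $\oplus$'') both read as chaining two isometries, e.g. $\phi \oplus \psi \cong \phi' \oplus \psi \cong \phi' \oplus \psi'$, which you are not entitled to do here. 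The repair for $\oplus$ is a single induction on $\dim \phi$ carrying both hypotheses simultaneously: if $x, y, c_3, \ldots, c_n$ witness $\phi \cong \phi'$, then $x, y, c_3, \ldots, c_n$ followed by the entries of $\psi$ witness $\phi \oplus \psi \cong \phi' \oplus \psi'$, the two tail conditions coming from the inductive hypothesis applied to $\langle a_2, \ldots, a_n \rangle \cong \langle x, c_3, \ldots, c_n \rangle$ together with $\psi \cong \psi$, and to $\langle b_2, \ldots, b_n \rangle \cong \langle y, c_3, \ldots, c_n \rangle$ together with $\psi' \cong \psi$; only reflexivity and symmetry of $\cong_n$ are used, and both are available in the pre-quadratic setting (the definition of $\cong_n$ is literally symmetric under exchanging $(a_i, x)$ with $(b_i, y)$ once binary symmetry is known). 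A similar reorganisation is needed after your (correct) scaling lemma to avoid chaining $\phi \otimes \psi \cong \phi \otimes \psi' \cong \phi' \otimes \psi'$.

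Your item (2) is sound in outline, since there $R$ is quadratically presentable and transitivity --- hence also permutation-invariance of $\cong$ --- is legitimately available; but the appeal to axiom (ii) is an unnecessary detour. Once the cancelled entry $c$ is moved to the first slot of both $(n+1)$-dimensional forms, unfolding the definition produces $\langle c, x \rangle \cong \langle c, y \rangle$ among the chain witnesses, whence $cx = cy$ and $x = y$ by cancellation in the group $\mathcal{S}_R^{\ast}$, and then $\phi_1 \cong \langle x, c_3, \ldots, c_{n+1} \rangle = \langle y, c_3, \ldots, c_{n+1} \rangle \cong \phi_2$ concludes by transitivity, with no use of axiom (ii) at all.
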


\begin{proof}
  Induction on the dimension of forms. 
\end{proof}

\begin{definition}
  Let $R$ be a quadratically presentable field. Two forms $\phi$ and $\psi$
  will be called {\tmem{Witt equivalent}}, denoted $\phi \sim \psi$, if, for
  some integers $m, n \geq 0$:
  \[ \phi \oplus m \times \langle 1, - 1 \rangle \equiv \psi \oplus n \times
     \langle 1, - 1 \rangle \]
\end{definition}

\begin{remark}
  It is easily verified that $\sim$ is an equivalence relation on forms over
  $R$, compatible with (and, clearly, coarser than) the isometry. One also
  easily checks that Witt equivalence is a congruence with respect to
  orthogonal sum and tensor product of forms. Denote by $W (R)$ the set of
  equivalence classes of forms over $R$ under Witt equivalence, and by
  $\bar{\phi}$ the equivalence class of $\phi$. With the operations
  \[ \bar{\phi} + \bar{\psi} = \overline{\phi \oplus \psi} \hspace{1cm}
     \bar{\phi} \cdot \bar{\psi} = \overline{\phi \otimes \psi} \]
  $W (R)$ is a commutative ring, having as zero the class $\overline{\langle
  1, - 1 \rangle}$, and $\overline{\langle 1 \rangle}$ as multiplicative
  identity. 
\end{remark}

\begin{definition}
  Let $R$ be a quadratically presentable field. $W (R)$ with binary operations
  as defined above is called the \tmtextit{{\tmem{Witt ring}} of $R$}.
\end{definition}

As one might expect, the main example of a Witt ring of a quadratically
presentable field, is the Witt ring of the quadratically presentable field
induced by the quadratic hyperfield of a field.

\begin{theorem}
  \label{th:hauptsatz}For a field $k$, $W (\mathcal{P}^{\ast} (Q (k)))$ is
  just the usual Witt ring $W (k)$ of non-degenerate symmetric bilinear forms
  of $k$.
\end{theorem}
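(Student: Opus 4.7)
The plan is to exhibit a ring isomorphism
$$\Phi \colon W(k) \;\longrightarrow\; W(\mathcal{P}^{\ast}(Q(k)))$$
that is already defined at the level of forms by
$$\Phi\bigl(\langle a_1, \dots, a_n \rangle\bigr) \;=\; \langle \{\bar a_1\}, \dots, \{\bar a_n\} \rangle,$$
where $\bar a_i \in k/_m k^{\ast 2}$ is the square class of $a_i \in k^{\ast}$. The supercompacts of $\mathcal{P}^{\ast}(Q(k))$ different from $\{\bar 0\}$ are exactly the singletons $\{\bar a\}$ with $a\in k^{\ast}$, so the target is indeed a form in the sense of Section~5, and by choosing representatives in each square class one sees that every form over $\mathcal{P}^{\ast}(Q(k))$ arises in this way; hence surjectivity on forms is immediate.

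The heart of the argument is the binary case. Classically, $\langle a_1, a_2\rangle \cong \langle b_1, b_2\rangle$ over $k$ means $a_1 a_2 \equiv b_1 b_2 \pmod{k^{\ast 2}}$ and $b_1 \in D_k(a_1, a_2)$. The first condition translates verbatim to $\{\bar a_1\}\cdot\{\bar a_2\} = \{\bar b_1\}\cdot\{\bar b_2\}$ in $\mathcal{P}^{\ast}(Q(k))$, while the representation condition translates to $\{\bar b_1\} \leq \{\bar a_1\} + \{\bar a_2\}$, i.e.\ $\bar b_1 \in \bar a_1 \mathbin{+'} \bar a_2$ in $Q(k)$. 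When $\mathrm{char}(k)\neq 2$ and $k \notin \{\mathbb{F}_3, \mathbb{F}_5\}$, this is exactly Example~\ref{quad}. In the exceptional cases, I would verify that the extra terms in $+'$ account correctly for the classical representations: the added $\{\bar a_1, \bar a_2\}$ corresponds to the trivial representations $(1,0),(0,1)$ (so that reflexivity $a_i \in D_k(a_1, a_2)$ is preserved), and the collapse to all of $k/_m k^{\ast 2}$ when $\bar a_1 = -\bar a_2$ corresponds to the hyperbolic plane's representing every value.

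Once the binary correspondence is in place, the equivalence of isometry relations in all dimensions follows by induction: both the classical definition (the Witt chain equivalence of Elman--Lam--Wadsworth type used for arbitrary characteristic) and Definition~5.7 here follow the same recursive pattern anchored in binary isometry, so the equivalences match step by step. Compatibility of $\Phi$ with orthogonal sum and tensor product is immediate from the parallel formulas, and $\langle 1, -1\rangle$ maps to $\langle \{\bar 1\}, \{\overline{-1}\}\rangle$, so Witt equivalence is respected and $\Phi$ descends to a well-defined ring homomorphism $W(k) \to W(\mathcal{P}^{\ast}(Q(k)))$.

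For injectivity, if $\Phi(\bar\phi)=0$ then after adding suitable copies of the hyperbolic plane the image is isometric to a totally hyperbolic form in $\mathcal{P}^{\ast}(Q(k))$; pulling this isometry back via the binary correspondence, together with Witt cancellation in both settings, yields $\phi \sim 0$ in $W(k)$. The main obstacle I expect is the careful verification of the binary case in characteristic $2$ and over $\mathbb{F}_3,\mathbb{F}_5$: here $+'$ departs from the naive multivalued sum, and one has to check, case by case depending on whether $\bar a_1, \bar a_2$ coincide, differ, or are additive inverses, that the resulting values are exactly the nontrivially represented ones. Everything beyond that is routine induction on dimension, once the base case is secured.
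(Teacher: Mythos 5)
Your proposal is correct and follows essentially the same route as the paper, whose entire proof consists of asserting that the map $\overline{\langle a_1,\ldots,a_n\rangle}\mapsto\overline{\langle\{a_1\},\ldots,\{a_n\}\rangle}$ is well-defined and a ring isomorphism; you define the same map and supply the verification plan (binary case first, including the characteristic-$2$ and $\mathbb{F}_3,\mathbb{F}_5$ adjustments built into $+'$, then induction on dimension) that the paper leaves implicit.
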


\begin{proof}
  The map
  \[ \begin{array}{lll}
       \omega : W (k) & \longrightarrow & W (\mathcal{P}^{\ast} (Q (k)))\\
       \overline{\langle a_1, \ldots, a_n \rangle} & \mapsto &
       \overline{\langle \{a_1 \}, \ldots, \{a_n \} \rangle}
     \end{array} \]
  is well-defined and an isomorphism of rings.
\end{proof}

\begin{remark}
  \label{rem:uniform}Notice that Theorem \ref{th:hauptsatz} provides a uniform
  construction of the Witt ring for all charateristics as well as for
  $\mathbb{F}_3$ and $\mathbb{F}_5$.
\end{remark}

\begin{definition}[Dickmann-Miraglia]
  A pre-special group {\tmem{{\cite[Definition 1.2]{DicMir00}}}} is a group
  $G$ of exponent $2$ together with a distinguished element $- 1$ and a binary
  operation $\cong$ on $G \times G$ such that, for all $a, b, c, d \in G$:
  \begin{enumerateroman}
    \item $\cong$ is an equivalence relation;
    
    \item $(a, b) \cong (b, a)$;
    
    \item $(a, - a) \cong (1, - 1)$;
    
    \item $[(a, b) \cong (c, d)] \Rightarrow [ab = cd]$;
    
    \item $[(a, b) \cong (c, d)] \Rightarrow [(a, - c) \cong (- b, d)]$;
    
    \item $[(a, b) \cong (c, d)] \Rightarrow \forall x \in G [(xa, xb) \cong
    (xc, xd)]$.
  \end{enumerateroman}
\end{definition}

\begin{remark}
  Let $(G, \cong, - 1)$ be a pre-special group. The relation $\cong$ can be
  extended to the set $\underbrace{G \times \ldots \times G}_n$ as follows:
  \[ (a_1, \ldots, a_n) \cong_n (b_1, \ldots, b_n) \]
  provided that there exist $x, y, c_3, \ldots, c_n \in G$ such that
  \begin{enumerateroman}
    \item $(a_1, x) \cong (b_1, y)$;
    
    \item $(a_2, \ldots, a_n) \cong_{n - 1} (x, c_3, \ldots, c_n)$;
    
    \item $(b_2, \ldots, b_n) \cong_{n - 1} (y, c_3 \ldots, c_n)$.
  \end{enumerateroman}
  A {\tmem{special group}} {\cite[Definition 1.2]{DicMir00}} is a pre-special
  group $(G, \cong, - 1)$ such that $\cong_n$ is an equivalence relation for
  all $n \in \mathbb{N}$.
\end{remark}

\begin{remark}
  Let $(R, \leq, 0, +, -, \cdot, 1)$ be a quadratically presentable field.
  Then $(\mathcal{S}_R^{\ast}, \cong, - 1)$ is a special group. The only
  non-trivial parts to check are that $\langle a, - a \rangle \cong \langle 1,
  - 1 \rangle$ and that $\langle a, b \rangle \cong \langle c, d \rangle$
  implies $\langle a, - c \rangle \cong \langle - b, d \rangle$, for $a, b, c,
  d \in \mathcal{S}_R^{\ast}$. The first statement follows from the fact that
  $a \leq a + 1$ implies $1 \leq a - a$. For the second assume $ab = cd$ and
  $a \leq c + d$. Thus $d \leq a - c$ by the exchange law, so that $\langle d,
  - b \rangle \cong \langle a, - c \rangle$.
  
  Now, given suitable notions of morphisms, it can be shown that the
  assignment
  \[ S : (R, \leq, 0, +, -, \cdot, 1) \mapsto (\mathcal{S}_R^{\ast}, \cong, -
     1) \]
  is functorial and that $S$ is, in fact, an equivalence of categories. Hence
  the construction of Witt rings we provide (c.f. Theorem \ref{th:hauptsatz}
  and Remark \ref{rem:uniform}) carries over to special groups. Conversely,
  the relevant constructions could be carried out directly in the category of
  special groups. However, the formalism of presentable algebras is of
  independent interest, this since the category of presentable groups as well
  as the category of {\tmem{presentable modules}} over a presentable ring (not
  formally introduced here) exhibit quite good properties. This circle of
  ideas will be addressed in a forthcoming paper.
\end{remark}

\section{Quotients in presentable fields}

In order to investigate Witt rings of presentable fields, one needs to know
how to pass from presentable fields to quadratically presentable fields. We
are ``almost" able to do that, and will show how one can build a
pre-quadratically presentable field from arbitrary presentable field -- it is,
however, an open question when the resulting presentable field is
quadratically presentable. The main tool to be used are quotients of
presentable fields. Before we proceed to general quotients, we focus on a
rather special case of quotients ``modulo'' multiplicative subsets of
supercompacts. These are, in fact, the only quotients that we need in the
sequel, which explains why we choose to present our exposition in this
particular manner.

\begin{theorem}
  \label{localization}Let $(R, \leq, 0, +, -, \cdot, 1)$ be a presentable
  field. Let $T \subseteq \mathcal{S}_R^{\ast}$ be a multiplicative set i.e.
  for all $s, t \in T$, $st \in T$. Define the relation $\sim$ on
  $\mathcal{S}_R$ by
  \[ a \sim b \overset{\tmop{def} .}{\Longleftrightarrow} \exists s, t \in T
     [as = bt] \]
  This is an equivalence relation, whose equivalence classes will be denoted
  by $\bar{a}$, $a \in \mathcal{S}_R$. Let
  \[ \bar{a} \overline{\cdummy} \bar{b} = \overline{ab} \hspace{1cm}
     \overline{-} \bar{a} = \overline{- a} \]
  and let
  \[ \bar{a} \in \bar{b} \overline{+} \bar{c}  \overset{\tmop{def}
     .}{\Longleftrightarrow} \exists s, t, u \in T [as \leq bt + cu] \]
  Then $(\mathcal{S}_R / \sim, \bar{0}, \overline{+}, \overline{-}, \bar{1},
  \overline{\cdummy})$ is a hyperfield.
\end{theorem}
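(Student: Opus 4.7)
My plan is to verify, in order, that $\sim$ is an equivalence relation on $\mathcal{S}_R$, that the four operations descend to the quotient, and then that the resulting structure satisfies all the hyperfield axioms. Reflexivity of $\sim$ uses any fixed $\tau \in T$ via $a\tau = a\tau$, symmetry is immediate, and transitivity chains $as_1 = bt_1$ and $bs_2 = ct_2$ into $a(s_1 s_2) = c(t_1 t_2)$ using commutativity and multiplicativity of $T$. Well-definedness of $\bar{\cdot}$ follows by multiplying two witnessing equalities, and that of $\bar{-}$ follows from compatibility of $-$ and $\cdot$ in $R$, giving $(-a)s = -(as)$. For $\bar{+}$, if $as \leq bt + cu$ witnesses $\bar{a} \in \bar{b} + \bar{c}$ and representatives are changed via $a p_0 = a' q_0$, $b p_1 = b' q_1$, $c p_2 = c' q_2$, then multiplying the inequality through by $p_0 p_1 p_2$ and using distributivity, commutativity, and monotonicity of $+$ yields the witness $a'(q_0 s p_1 p_2) \leq b'(q_1 t p_0 p_2) + c'(q_2 u p_0 p_1)$ for $\bar{a'} \in \bar{b'} + \bar{c'}$.

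For the hypermonoid axioms, commutativity of $\bar{+}$ is immediate. The neutral element axiom $\bar{a} + \bar{0} = \{\bar{a}\}$ uses that products $xs$ and $at$ of supercompacts are again supercompacts (by the axiom $\mathcal{S}_{xy} = \{st : s \in \mathcal{S}_x, t \in \mathcal{S}_y\}$), so the witnessing inequality $xs \leq at$ between minimal elements must in fact be an equality, forcing $\bar{x} = \bar{a}$. The axiom $\bar{0} \in \bar{a} + \overline{-a}$ reduces to $0 \leq a + (-a)$ in $R$ multiplied through by any $\tau \in T$. The exchange axiom descends directly: given $as \leq bt + cu$ with $as$, $bt$, $cu$ all supercompact, the presentable-group exchange law (after rewriting as $as \leq cu + bt$) yields $cu \leq as + (-b)t$, which is precisely a $T$-witness for $\bar{c} \in \bar{a} + \overline{-b}$.

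The main obstacle is hyper-associativity of $\bar{+}$. Given $\bar{d} \in (\bar{a} + \bar{b}) + \bar{c}$ via $e s_1 \leq a t_1 + b u_1$ and $d s_2 \leq e t_2 + c u_2$, I multiply the first inequality by $t_2$ and the second by $s_1$, chain them by monotonicity, and use associativity of $+$ in $R$ to obtain
\[
  d \cdot s_1 s_2 \;\leq\; a \cdot t_1 t_2 \;+\; \bigl( b \cdot u_1 t_2 + c \cdot s_1 u_2 \bigr).
\]
Since $d s_1 s_2$ is supercompact and $\mathcal{S}_{a t_1 t_2} = \{a t_1 t_2\}$, the remark following the definition of presentable monoid (itself a consequence of Proposition \ref{prop:unique}) produces a supercompact $f$ satisfying $f \leq b u_1 t_2 + c s_1 u_2$ and $d s_1 s_2 \leq a t_1 t_2 + f$. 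Multiplying each of these inequalities through by an arbitrary $\tau \in T$ then produces $T$-witnesses for both $\bar{f} \in \bar{b} + \bar{c}$ and $\bar{d} \in \bar{a} + \bar{f}$, so $\bar{d} \in \bar{a} + (\bar{b} + \bar{c})$. The reverse inclusion is symmetric.

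The remaining axioms are straightforward. The commutative monoid structure $(\mathcal{S}_R / \sim, \bar{\cdot}, \bar{1})$ descends directly from $R$, and $\bar{0} \bar{\cdot} \bar{a} = \bar{0}$ since $0 \cdot a = 0$ in $R$. For distributivity $\bar{a}(\bar{b} + \bar{c}) = \bar{a}\bar{b} + \bar{a}\bar{c}$, the forward inclusion multiplies a witness of $\bar{x} \in \bar{b} + \bar{c}$ through by $a$; for the reverse inclusion with $a \neq 0$, the inverse $a^{-1} \in \mathcal{S}_R^{\ast}$ (available since $R$ is a presentable \emph{field}) rewrites a witness of $\bar{y} \in \bar{a}\bar{b} + \bar{a}\bar{c}$ into one for $\overline{a^{-1} y} \in \bar{b} + \bar{c}$. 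Finally, every nonzero $\bar{a}$ has inverse $\overline{a^{-1}}$, and $\bar{0} \neq \bar{1}$ because $0 \cdot s = 1 \cdot t$ with $t \in T \subseteq \mathcal{S}_R^{\ast}$ would force the contradiction $t = 0$.
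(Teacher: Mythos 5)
Your proposal is correct and follows essentially the same route as the paper's proof: verify $\sim$ is an equivalence, check well-definedness by multiplying witnesses through, and establish each hyperfield axiom using the two key facts that products of supercompacts are supercompact (so inequalities between them force equalities) and that a supercompact below a sum decomposes via supercompacts below the summands. You are in fact slightly more thorough in places (well-definedness of $\overline{+}$ in all three slots, both inclusions of distributivity via $a^{-1}$), but the underlying argument is the same.
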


\begin{proof}
  The relation $\sim$ is clearly reflexive and symmetric, and for transitivity
  assume $as = bt$ and $bu = cv$, for some $a, b, c \in \mathcal{S}_R$, $s, t,
  u, v \in T$. Then $asu = btu$ and $btu = cvt$ with $su, tu, vt \in T$ thanks
  to the multiplicativity of $T$.
  
  Next, the operation $\overline{\cdummy}$ is clearly well-defined, and to see
  that so is $\overline{+}$, assume $\bar{b} = \overline{b'}$ and $\bar{c} =
  \overline{c'}$, say, $vb = v' b'$ and $wc = w' c'$, for some $v, v', w, w'
  \in T$. Then
  \begin{eqnarray*}
    \bar{a} \in \bar{b} \overline{+} \bar{c} & \Leftrightarrow & \exists s, t,
    u \in T [as \leq bt + cu]\\
    & \Rightarrow & \exists s, t, u [asvw \leq bvwt + cwvu]\\
    & \Leftrightarrow & \exists s, t, u [asvw \leq b' v' wt + c' w' vu]\\
    & \Leftrightarrow & \bar{a} \in \overline{b'} \overline{+} \overline{c'}
  \end{eqnarray*}
  In order to show that $\mathcal{S}_R / \sim$ with operations defined as
  above is, indeed, a hyperring, we note that both the commutativity of
  $\overline{+}$ and the fact that $(\mathcal{S}_R / \sim, \bar{1},
  \overline{\cdummy})$ forms a commutative group are obvious, that $\bar{0}
  \in \bar{a} \overline{-} \bar{a}$, for all $\bar{a} \in \mathcal{S}_R /
  \sim$, follows immediately from $0 \leq a - a$ for all $a \in
  \mathcal{S}_R$, that $\bar{0} \overline{\cdummy} \bar{a} = \bar{0}$ is clear
  in view of $0 \cdot 1 = 0$, and that $\bar{0} \neq \bar{1}$ is apparent, as
  $1 \cdot t = 0$, for some $t \in T$, leads to $0 = 1$. It remains to show
  the neutrality of $\bar{0}$, associativity of $\overline{+}$, cancellation
  and distributativity of $\overline{\cdummy}$ and $\overline{+}$.
  
  Assume $\bar{b} \in \bar{a} \overline{+} \bar{0}$, so $bs \leq at + 0 = at$,
  for some $s, t \in T$. But then $bst^{- 1} \leq a$, and, since $a$ is a
  supercompact, this yields $bst^{- 1} = a$ and, consequently, $\bar{b} =
  \bar{a}$.
  
  Assume $\bar{d} \in \bar{a} \overline{+} (\bar{b} \overline{+} \bar{c})$, so
  that $\bar{d} \in \bar{a} \overline{+} \bar{e}$ with $\bar{e} \in \bar{b}
  \overline{+} \bar{c}$. Hence $ds \leq at + eu$ and $es' \leq bt' + cu'$, for
  some $s, t, u, s', t', u' \in T$. Thus $dss' \leq ats' + eus'$ and $eus'
  \leq but' + cuu'$, so that $dss' \leq ats' + (but' + cuu') = (ats' + but') +
  cuu'$. It follows that there exist supercompacts $d', f, c' \in
  \mathcal{S}_R^{\ast}$ with $d' \leq dss'$, $f \leq ats' + but'$ and $c' \leq
  cuu'$ with $d' \leq f + c'$. Using the same argument as in the proof of
  neutrality of $\bar{0}$, we easily check that $d' = d$ and $c' = c$.
  Therefore $d \leq f + c$ and $f \leq ats' + but'$. This yields $\bar{d} \in
  \bar{f} \overline{+} \bar{c}$ with $\bar{f} \in \bar{a} \overline{+}
  \bar{b}$, so that $\bar{d} \in (\bar{a} \overline{+} \bar{b}) \overline{+}
  \bar{c}$.
  
  Assume $\bar{a} \in \bar{b} \overline{+} \bar{c}$, so that $at \leq bs +
  cu$, for some $s, t, u \in T$. Then there are supercompacts $a' \leq at$,
  $b' \leq bs$ and $c' \leq cu$ such that $a' \leq b' + c'$. Using the same
  trick as before we conclude $a = a'$, $b = b'$, $c = c'$, so that, in fact,
  $a \leq b + c$, and thus $b \leq a - c$, which implies $\bar{b} \in \bar{a}
  \overline{+} \bar{c}$.
  
  Finally, if $\bar{d} \in \bar{a} \overline{\cdummy} (\bar{b} \overline{+}
  \bar{c})$, then $\bar{d} = \overline{ae}$ with $\bar{e} \in \bar{b}
  \overline{+} \bar{c}$, and thus $es \leq bt + cu$, for some $s, t, u \in T$.
  But then $aes \leq abt + acu$, so $\overline{ae} \in \overline{ab}
  \overline{+} \overline{ac}$.
\end{proof}

\begin{remark}
  Observe that the above works, in fact, for any presentable ring $(R, \leq,
  0, +, -, \cdot, 1)$ and a subgroup $T \subseteq \mathcal{S}_R^{\ast}$ of the
  multiplicative monoid $\mathcal{S}_R^{\ast}$. That is, we only need to be
  able to invert the elements of $T$ for the argument to go through.
\end{remark}

\begin{definition}
  The {\tmem{{\tmem{\tmtextit{quotient}{\tmem{}}}}}} of $(R, \leq, 0, +, -,
  \cdot, 1)$ \tmtextit{modulo the multiplicative set} $T$ is the presentable
  field $(\mathcal{P}^{\ast} (\mathcal{S}_R /_{\sim}), \subseteq, \{ \bar{0}
  \})$ with the hyperfield $(\mathcal{S}_R /_{\sim}, \bar{0}, \overline{+},
  \overline{-}, \bar{1}, \overline{\cdummy})$ defined in Theorem
  \ref{localization} and will be denoted by $R /_m T$.
\end{definition}

Theorem \ref{localization}, as remarked before, is a special case of the
following, more general result:

\begin{theorem}
  \label{quotient}Let $(R, \leq, 0, +, -, \cdot, 1)$ be a presentable field.
  Let $\sim$ be a nontrivial congruence on the set $\mathcal{S}_R^{\ast}$ of
  supercompacts of $R$, i.e. an equivalence relation such that $0 \nsim 1$,
  and, for all $a, a', b, b' \in \mathcal{S}_R^{\ast}$, if $a \sim a'$, and $b
  \sim b'$ then
  \[ ab \sim a' b' \hspace{1cm} a + b \sim a' + b' \hspace{1cm} - a \sim - a'
  \]
  Denote by $\bar{a}$ the equivalence class of $a \in \mathcal{S}_R$. Let
  \[ \bar{a} \overline{\cdummy} \bar{b} = \overline{ab} \hspace{1cm}
     \overline{-} \bar{a} = \overline{- a} \]
  and let
  \[ \bar{a} \in \bar{b} \overline{+} \bar{c} \
     \overset{def.}{{\Longleftrightarrow}}  \exists a' \in \bar{a}, b' \in
     \bar{b}, c' \in \bar{c}  [a' \leq b' + c'] \]
  Then $(\mathcal{S}_R / \sim, \bar{0}, \overline{+}, \overline{-}, \bar{1},
  \overline{\cdummy})$ is a hyperfield.
\end{theorem}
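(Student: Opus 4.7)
The plan is to follow the blueprint of the proof of Theorem~\ref{localization}, replacing multiplicative witnesses in $T$ by direct appeals to the congruence $\sim$ and its compatibility with the operations. The commutative group structure on $(\mathcal{S}_R^{\ast}/{\sim}, \bar 1, \overline{\cdummy})$ is inherited from $\mathcal{S}_R^{\ast}$, and well-definedness of $\overline{\cdummy}$ and $\overline{-}$ is immediate from the compatibility clauses. The axioms $\bar 0 \overline{\cdummy} \bar a = \bar 0$, $\bar 0 \neq \bar 1$, and $\bar 0 \in \bar a \overline{+} (\overline{-} \bar a)$ follow respectively from $0 \cdummy a = 0$ in $R$, from the nontriviality $0 \nsim 1$, and from $0 \leq a + (-a)$ in $R$. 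The hyperoperation $\overline{+}$ is well-defined because the existential quantifier in its definition ranges over entire classes, so it does not depend on the choice of representatives. For neutrality of $\bar 0$: if $\bar b \in \bar a \overline{+} \bar 0$ then $b' \leq a' + 0 = a'$ for some $a' \in \bar a$, $b' \in \bar b$, and supercompactness of $a'$ forces $b' = a'$, whence $\bar b = \bar a$.

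The main obstacle is associativity of $\overline{+}$. Suppose $\bar d \in \bar a \overline{+} (\bar b \overline{+} \bar c)$. Unfolding the definition yields representatives
\[ d_1 \leq a_1 + e_1, \qquad e_2 \leq b_1 + c_1, \]
with $d_1 \in \bar d$, $a_1 \in \bar a$, $b_1 \in \bar b$, $c_1 \in \bar c$, and $e_1, e_2$ in a common class $\bar e$ but not necessarily equal. The multiplicative rescaling trick of Theorem~\ref{localization} is unavailable, so we invoke the compatibility $a_1 + e_1 \sim a_1 + e_2$, read as the condition that $\mathcal{S}_{a_1 + e_1}$ and $\mathcal{S}_{a_1 + e_2}$ project to the same subset of the quotient: from $d_1 \leq a_1 + e_1$ we obtain a supercompact $d_2 \leq a_1 + e_2$ with $d_2 \sim d_1$. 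Monotonicity and associativity of $+$ in $R$ then yield
\[ d_2 \leq a_1 + (b_1 + c_1) = (a_1 + b_1) + c_1 = \bigvee_{s \in \mathcal{S}_{a_1 + b_1}} (s + c_1). \]
Compactness of the supercompact $d_2$ extracts a supercompact $f \leq a_1 + b_1$ with $d_2 \leq f + c_1$; then $\bar f \in \bar a \overline{+} \bar b$ and $\bar d \in \bar f \overline{+} \bar c$, giving $\bar d \in (\bar a \overline{+} \bar b) \overline{+} \bar c$. The reverse inclusion is symmetric.

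For the exchange law, assume $\bar a \in \bar b \overline{+} \bar c$ with witness $a' \leq b' + c'$ at the level of supercompact representatives; the exchange law in the presentable group structure of $R$ supplies $b' \leq a' + (-c')$, whence $\bar b \in \bar a \overline{+}(\overline{-} \bar c)$. Distributivity of $\overline{\cdummy}$ over $\overline{+}$ follows by unfolding $\bar d \in \bar a \overline{\cdummy} (\bar b \overline{+} \bar c)$ to $\bar d = \overline{a_1 e}$ with $e \leq b_1 + c_1$, and multiplying by $a_1$ in $R$ to obtain $a_1 e \leq a_1 b_1 + a_1 c_1$, so that $\overline{a_1 e} \in \overline{a_1 b_1} \overline{+} \overline{a_1 c_1}$. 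The only genuinely delicate ingredient throughout is the reading of the congruence condition $a + b \sim a' + b'$ as furnishing, for each supercompact below $a + b$, an $\sim$-equivalent supercompact below $a' + b'$; this replaces the automatic multiplicative transfer available in Theorem~\ref{localization} and is what powers the associativity argument in the general setting.
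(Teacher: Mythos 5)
Your proof is correct and follows exactly the route the paper indicates: its own proof of Theorem~\ref{quotient} consists of the single remark that the argument mimics that of Theorem~\ref{localization}, plus the observation that $\bar{0}\neq\bar{1}$ comes from $0\nsim 1$. You supply the one genuinely nontrivial adaptation that this glosses over — replacing the multiplicative rescaling used in the associativity step of Theorem~\ref{localization} by the additive compatibility clause of the congruence, read (as you rightly flag it must be, since $a+b$ need not be a supercompact) as saying that $\mathcal{S}_{a+b}$ and $\mathcal{S}_{a'+b'}$ have the same image in the quotient — and the remaining verifications are routine transpositions of the earlier proof.
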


The proof mimics the one of Theorem \ref{localization}. That $\bar{0} \neq
\bar{1}$ follows from the fact that $0 \nsim 1$.

\section{From presentable fields to pre-quadratically presentable fields}

\begin{remark}
  \label{th6-1}Let $(R, \leq, 0, +, -, \cdot, 1)$ be a presentable field and
  define the following multivalued addition on the set $\mathcal{S}_R$ of
  supercompacts of $R$:
  \[ a \in b \tmmathbf{+} c  \ \overset{def.}{{\Longleftrightarrow}}  a
     \leq b + c \]
  Then $(\mathcal{S}_R, 0, \tmmathbf{+}, -, 1, \cdummy)$ is a hyperfield.
  Further, define the prime addition on $\mathcal{S}_R$ as follows:
  \[ a \tmmathbf{+}' b = \left\{ \begin{array}{ll}
       a \tmmathbf{+} b, & \text{if } a = 0 \text{or } b = 0\\
       a \tmmathbf{+} b \cup \{a, b\}, & \text{if } a \neq 0, b \neq 0, a \neq
       - b\\
       \mathcal{S}_R, & \text{if } a \neq 0, b \neq 0, a = - b
     \end{array} \right. \]
  Then $(\mathcal{S}_R, 0, \tmmathbf{+}', -, 1, \cdummy)$ is again a
  hyperfield {\cite[Proposition 2.1]{GlaMar17}}, called the {\tmem{prime
  hyperfield}} of $(R, \leq, 0, +, -, \cdot, 1)$. The induced presentable
  field $(\mathcal{P}^{\ast} (\mathcal{S}_R), \subseteq, \{0\}, +', -, \cdot,
  \{1\})$, that will be called the {\tmem{prime presentable field}}, satisfies
  the condition:
  \[ \{a\} \subseteq \{a\} +' \{b\} \text{ for all } \{a\}, \{ b \} \in
     \mathcal{S}^{\ast}_{\mathcal{P}^{\ast} (\mathcal{S}_R)} \]
\end{remark}

\begin{theorem}
  \label{th6-2}Let $(R, \leq, 0, +, -, \cdot, 1)$ be a presentable field such
  that
  \[ a \leq a + b \text{ for all } a \in \mathcal{S}^{\ast}_R, b \in
     \mathcal{S}_R \]
  $T \overset{\tmop{def} .}{=} \{s \in \mathcal{S}_R^{\ast} \mid s \leq a^2 
  \text{for some } a \in R\}$ is a multiplicative set and the quotient $R /_m
  T$ of $R$ modulo $T$ is a pre-quadratically presentable field.
\end{theorem}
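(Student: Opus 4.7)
The plan is to verify the two claims of the theorem separately: first that $T$ is multiplicative, then that the quotient $R/_m T$ satisfies all three axioms of pre-quadratical presentability. Multiplicativity of $T$ is immediate: given $s,t \in T$ with $s \leq a^2$ and $t \leq b^2$, compatibility of $\cdot$ with $\leq$ yields $st \leq (ab)^2$, and $st \neq 0$ because $\mathcal{S}_R^{\ast}$ is a multiplicative group. That $(\mathcal{S}_R/\sim, \bar{0}, \bar{+}, \bar{-}, \bar{1}, \bar{\cdot})$ is a hyperfield is Theorem \ref{localization}, and so $R/_m T = \mathcal{P}^{\ast}(\mathcal{S}_R/\sim)$ is a presentable field by the standard construction.

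For axiom (iii), observe that every nonzero supercompact of $R/_m T$ is of the form $\{\bar{a}\}$ with $a \in \mathcal{S}_R^{\ast}$; since $a^2 \leq a^2$ with $a \in R$, we get $a^2 \in T$, so $\overline{a^2} = \bar{1}$ and $\{\bar{a}\}^2 = \{\bar{1}\}$. Axiom (i) is equally direct: $\{\bar{a}\} \subseteq \{\bar{a}\} + \{\bar{b}\}$ unfolds to $\exists s,t,u \in T[as \leq at + bu]$; choose $s = t = u = 1 \in T$ and apply the standing hypothesis $a \leq a+b$ to any nonzero representative.

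The real work is axiom (ii). Assume $\{\bar{a}\} \leq \{\bar{1}\} - \{\bar{b}\}$ and $\{\bar{a}\} \leq \{\bar{1}\} - \{\bar{c}\}$; unfolding gives $s_i, t_i, u_i \in T$ with $as_i \leq t_i - b_i u_i$ for $i=1,2$ (where $b_1 = b$, $b_2 = c$). The exchange law at the supercompact level rewrites these as $b_i u_i \leq t_i - as_i$. Multiplying using compatibility of $\cdot$ with $\leq$ and distributivity over $+$ yields the key inequality
\[ bc u_1 u_2 \;\leq\; (t_1 - as_1)(t_2 - as_2) \;=\; t_1 t_2 + a^2 s_1 s_2 \;-\; a(s_1 t_2 + s_2 t_1), \]
which is the abstract shadow of the classical Brahmagupta--Fibonacci norm identity. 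Note that $t_1 t_2, a^2 s_1 s_2, s_1 t_2, s_2 t_1 \in T$ (using $T$ multiplicative and $a^2 \in T$), and all summands are supercompacts.

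The main obstacle is extracting from this four-term sum a witness of the required form $as \leq t - bc u$ (equivalently $bc u \leq t - as$) with $s, t, u \in T$. The plan is to use the standing hypothesis --- which extends from supercompacts to all of $R$ by monotonicity of $+$ and the decomposition $x = \bigvee \mathcal{S}_x$ --- to control the sums $t_1 t_2 + a^2 s_1 s_2$ and $s_1 t_2 + s_2 t_1$: by axiom (i) of $R$ the summand $t_1 t_2$ (resp.\ $s_1 t_2$) lies in the supercompact spectrum of each of these sums. Combined with iterated compactness of the supercompact $bc u_1 u_2$ and the exchange law applied one term at a time, one then selects representatives $t \in T$ above and $u \in T$ inside the relevant supercompact spectra so that $bcu_1 u_2 \leq t - au$, which becomes the desired witness in $R/_m T$. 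This bookkeeping of supercompact witnesses --- making precise the passage from the 4-term multiplicative identity to the 2-term pre-quadratic identity using only the hypothesis $a \leq a+b$ and the fact that $T$ contains all squares --- is the sole technical difficulty of the proof.
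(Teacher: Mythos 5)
Your treatment of the multiplicativity of $T$ and of axioms (i) and (iii) matches the paper's. The gap is in axiom (ii), exactly at the point you defer as ``bookkeeping'': the plan you sketch there does not go through. After multiplying the two inequalities you arrive at $bcu_1u_2 \leq (t_1t_2 + a^2s_1s_2) - a(s_1t_2+s_2t_1)$ and propose to ``select'' the witnesses $t_1t_2$ and $s_1t_2$ from the supercompact spectra of the two blocks. But the decomposition property of supercompacts under sums only asserts the \emph{existence} of supercompacts $v \leq t_1t_2+a^2s_1s_2$ and $w \leq s_1t_2+s_2t_1$ with $bcu_1u_2 \leq v - aw$; it gives no control over which ones you get, and a supercompact lying below a sum of two elements of $T$ need not itself belong to $T$ --- $T$ is not closed under passing to supercompacts of sums (in $\mathcal{P}^{\ast}(Q(k))$, for instance, the supercompacts below $\{\bar{1}\}+\{\bar{1}\}$ run over the whole value set of $\langle 1,1\rangle$). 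Knowing that $t_1t_2$ is \emph{one} element of the spectrum of $t_1t_2+a^2s_1s_2$ does not let you conclude that the $v$ you are handed is that element, or even that it lies in $T$.

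The paper's resolution is a completion of the square, and it requires a preliminary substitution you omit: first replace $t_i$, $s_i$, $u_i$ by the actual squares dominating them, passing from $ub \leq t - sa$ to $ub \leq y^2 - x^2 a$ (and similarly with primes), so that the product becomes $(y^2-x^2a)(y^{\prime 2}-x^{\prime 2}a)$. Using $0 \leq d - d$ this is bounded above by $(yy'+axx')^2 - a(x'y+xy')^2$. Now the positive block is literally a square and the negative block is $a$ times a square, so \emph{every} nonzero supercompact below the first lies in $T$ by the very definition of $T$, and every nonzero supercompact below the second factors as $aw''$ with $w'' \in T$ by the axiom $\mathcal{S}_{de} = \{st \mid s\in\mathcal{S}_d,\ t\in\mathcal{S}_e\}$ together with minimality of $a$. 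That is what makes the purely existential decomposition usable no matter which witnesses compactness produces; there is also a short case analysis on $v$ or $w$ being zero that your sketch does not address. Without the square-completion step the argument does not close.
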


\begin{proof}
  $T$ is multiplicative, for if $s \leq a^2$ and $t \leq b^2$, for some $s, t
  \in \mathcal{S}_R^{\ast}$, $a, b \in R$, then $st \leq a^2 b^2 = (ab)^2$ and
  $st \neq 0$, since $\mathcal{S}^{\ast}_R$ is a group. The condition
  \[ a \leq a + b \text{ for all } a \in \mathcal{S}^{\ast}_R, b \in
     \mathcal{S}_R  \]
  carries over to $R /_m T$, non-zero supercompacts of $R /_m T$ form a group,
  since in the process of taking a quotient modulo multiplicative set we end
  up with a presentable field, and, finally, squares of all non-zero
  supercompacts of $R /_m T$ are equal to identity, as they are just classes
  of squares of non-zero supercompacts in $R$, which are, by definition,
  equivalent to $1$.
  
  It remains to show that for all supercompacts $\{ \bar{a} \}$, $\{ \bar{b}
  \}$ and $\{ \bar{c} \}$ in $R /_m T$, if $\{ \bar{a} \} \subseteq \{ \bar{1}
  \} - \{ \bar{b} \}$ and $\{ \bar{a} \} \subseteq \{ \bar{1} \} - \{ \bar{c}
  \}$, then $\{ \bar{a} \} \subseteq \{ \bar{1} \} - \{ \overline{bc} \}$. Fix
  three supercompacts as above and assume the antedecent. This is equivalent
  to $\bar{a} \in \bar{1} \overline{-} \bar{b}$ and $\bar{a} \in \bar{1}
  \overline{-} \bar{c}$ in the hyperfield $\mathcal{S}_R / \sim$, which, in
  turn, is equivalent to
  \[ sa \leq t - ub \text{and } s' a \leq t' - u' c, \]
  for some non-zero supercompacts $s, s', t, t', u, u' \in R$ such that $s
  \leq x^2$, $s' \leq x^{\prime 2}$, $t \leq y^2$, $t' \leq y^{\prime 2}$, $u
  \leq z^2$, $u' \leq z^{\prime 2}$, for some $x, x', y, y', z, z' \in R$.
  Since $\mathcal{S}^{\ast}_R$ is a group, the elements $sa, s' a, ub, u' c$
  are also supercompacts, which allows switching terms between both sides of
  the above inequalities, and gives
  \[ ub \leq t - sa \text{and } u' c \leq t' - s' a' \]
  and, in turn
  \[ ub \leq y^2 - x^2 a \text{and } u' c \leq y^{\prime 2} - x^{\prime 2} a
  \]
  Hence
  \begin{eqnarray*}
    uu' bc & \leq & (y^2 - x^2 a)  (y^{\prime 2} - x^{\prime 2} a)\\
    & = & y^2 y^{\prime 2} - y^2 x^{\prime 2} a - x^2 y^{\prime 2} a + x^2
    x^{\prime 2} a^2\\
    & \leq & y^2 y^{\prime 2} - y^2 x^{\prime 2} a - x^2 y^{\prime 2} a + x^2
    x^{\prime 2} a^2 + 2 xx' yy' a - 2 xx' yy' a\\
    & = & (yy' + axx')^2 - a (x' y + xy')^2
  \end{eqnarray*}
  Let $v$ and $w$ be supercompacts with $uu' bc \leq v - w$ and $v \leq (yy' +
  axx')^2$ and $w \leq a (x' y + xy')^2$. If both $v$ and $w$ are equal to
  zero, then one of $b$ or $c$ is zero, so $\bar{a} \in \bar{1} \overline{-}
  \overline{bc}$ is just $\bar{a} \in \bar{1} \overline{-} \bar{b}$ or
  $\bar{a} \in \bar{1} \overline{-} \bar{c}$. If $v = 0$ and $w \neq 0$, then
  $w = w' w''$ for $w', w'' \in \mathcal{S}_R^{\ast}$ with $w' \leq a$ and
  $w'' \leq (x' y + xy')^2$. Thus $w' = a$, since $a$ is a supercompact
  itself, and hence a minimal element, and $w'' \in T$, so that $uu' bc \leq -
  aw''$, $w'' \in T$. But $- aw''$ is again a supercompact, as
  $\mathcal{S}_R^{\ast}$ is a group, so $uu' bc = - aw''$. But $- aw'' \leq -
  aw'' + 1$, so $uu' bc = - aw'' \leq 1 - aw''$, yielding $\bar{a} \in \bar{1}
  \overline{-} \overline{bc}$. Similarly, if $v \neq 0$ and $w = 0$, then
  $\overline{bc} = \bar{1} \in \bar{1} - \bar{a}$.
  
  This leaves us with the case $v \neq 0$ and $w \neq 0$. Then $v \in T$ and
  $w = w' w''$, for some $w', w'' \in \mathcal{S}_R^{\ast}$ with $w' \leq a$
  and $w'' \leq (x' y + xy')^2$. But then $w' = a$, and $w'' \in T$. So, at
  the end we obtain
  \[ uu' bc \leq v - aw'' \]
  with $uu', v, w'' \in T$, or, equivalently
  \[ aw'' \leq v - uu' bc \]
  which is the same as $\{ \bar{a} \} \subseteq \{ \bar{1} \} - \{
  \overline{bc} \}$.
\end{proof}

\begin{example}
  Let $k$ be a field, let $(\mathcal{P}^{\ast} (k), \subseteq, \{0\})$ be the
  induced presentable field. It follows from the construction that applying
  Remark \ref{th6-1} and Theorem \ref{th6-2} we obtain the pre-quadratically
  presentable field $\mathcal{P}^{\ast} (\mathcal{S}_{\mathcal{P}^{\ast} (k)})
  /_m T$, where
  \[ T = \{\{s\} \in \mathcal{S}_{\mathcal{P}^{\ast}
     (\mathcal{S}_{\mathcal{P}^{\ast} (k)})} \mid \{s\} \subseteq \{a\}^2
     \text{ for some } \{a\} \in \mathcal{P}^{\ast}
     (\mathcal{S}_{\mathcal{P}^{\ast} (k)})\} \]
  $\mathcal{P}^{\ast} (\mathcal{S}_{\mathcal{P}^{\ast} (k)}) /_m T$ is
  isomorphic to $\mathcal{P}^{\ast} (Q (k))$, hence quadratically presentable.
  We have in particular
  \[ W (\mathcal{P}^{\ast} (\mathcal{S}_{\mathcal{P}^{\ast} (k)})) \cong W
     (\mathcal{P}^{\ast} (Q (k))) \cong W (k) \]
\end{example}

\begin{remark}
  It is an open question when the resulting pre-quadratically presentable
  field is quadatically presentable.
\end{remark}


\begin{thebibliography}{99}
  \bibitem{CarMar82}A.B.~Carson, M.~Marshall, \tmtextit{Decomposition of
  Witt rings,} Canad. J. Math. \tmtextbf{34} (1982) 1276--1302.
  
  \bibitem{Cas64}J.W.S.~Cassels, \tmtextit{On the representation of rational
  functions as sums of squares,} Acta Arith. \tmtextbf{9} (1964), 79--82.
  
  \bibitem{Cor73a}C.~Cordes, \tmtextit{The Witt group and equivalence of
  fields with respect to quadratic forms,} J. Algebra \tmtextbf{26} (1973),
  400--421.
  
  \bibitem{Cor73b}C.~Cordes, \tmtextit{Quadratic forms over non-formally
  real fields with a finite number of quaternion algebras,} Pac. J. Math.
  \tmtextbf{63} (1973), 357--365.
  
  \bibitem{Dic93}M.~Dickmann, \tmtextit{Anneaux de Witt abstraits et groupes
  speciaux,} Sém. Struct. Alg. Ordonnées \tmtextbf{42}, Univ. Paris 7
  (1993).
  
  \bibitem{DicMir00}M.~Dickmann, F.~Miraglia, \tmtextit{Special groups.
  Boolean-theoretic methods in the theory of quadratic forms,} Memoirs of the
  Amer. Math. Society 689, American Mathematical Society, Providence, 2000.
  
  \bibitem{DP04}M.~Dickmann, A.~Petrovich, \tmtextit{Real semigroups and
  abstract real spectra I}, in: \tmtextit{Algebraic and Arithmetic Theory of
  Quadratic Forms,} 99 -- 119, Contemporary Math. \tmtextbf{344}, Amer. Math.
  Soc., Providence, RI, 2004.
  
  \bibitem{Gierz80}G.~Gierz, K.H.~Hofmann, K.~Keimel, J.D.~Lawson,
  M.~Mislove, D.S.~Scott. \tmtextit{A Compendium of Continuous Lattices,}
  Springer-Verlag, Berlin, Heidelberg, New York, 1980.
  
  \bibitem{GlaMar17}P.~Gładki, M.~Marshall, \tmtextit{Witt equivalence of
  function fields over global fields,} Trans. Amer. Math. Soc. \tmtextbf{369}
  (2017), 7861--7881.
  
  \bibitem{KR80}J.L.~Kleinstein, A.~Rosenberg, \tmtextit{Succinct and
  representational Witt rings,} Pacific J. Math. \tmtextbf{86} (1980) 99--137.
  
  \bibitem{KRW71}M.~Knebusch, A.~Rosenberg, R.~Ware, \tmtextit{Structure of
  Witt rings, quotients of abelian group rings and orderings of fields,} Bull.
  Amer. Math. Soc. \tmtextbf{77} (1971), 205--210.
  
  \bibitem{KRW72}M.~Knebusch, A.~Rosenberg, R.~Ware, \tmtextit{Structure of
  Witt rings and quotients of Abelian group rings,} Amer. J. Math.
  \tmtextbf{94} (1972), 119--155.
  
  \bibitem{KSS}M.~Kula, L.~Szczepanik, K.~Szymiczek, \tmtextit{Quadratic
  form schemes and quaternionic schemes,} Fund. Math. \tmtextbf{130} (1988),
  181 -- 190.
  
  \bibitem{Lam2005}T. Y.~Lam, \tmtextit{Introduction to quadratic forms over
  fields,} Graduate Studies in Mathematics \tmtextbf{67}, American
  Mathematical Society, Providence, RI, 2005.
  
  \bibitem{Mar80}M.~Marshall, \tmtextit{Abstract Witt rings,} Queen's Papers
  in Pure and Applied Math. \tmtextbf{57}, Queen's University, Kingston,
  Ontario (1980).
  
  \bibitem{Mar06}M.~Marshall, \tmtextit{Real reduced multirings and
  multifields}, J. Pure Appl. Algebra \tmtextbf{205} (2006) 452--468.
  
  \bibitem{Pfi66}A.~Pfister, \tmtextit{Quadratische Formen in beliebigen
  Körpern,} Invent. Math. \tmtextbf{1} (1966), 116--132.
  
  \bibitem{Szcz85a}L.~Szczepanik, \tmtextit{Fields and quadratic form
  schemes with the index of radical not exceeding 16,} Ann. Math. Silesianae
  \tmtextbf{1} (1985), 23--46.
  
  \bibitem{Szcz85b}L.~Szczepanik, \tmtextit{Quadratic form schemes with
  non-trivial radical,} Coll. Math. \tmtextbf{49} (1985), 143--160.
  
  \bibitem{Witt37}E.~Witt, \tmtextit{Theorie der quadratischen Formen in
  beliebigen Körpern}, J. Reine Angew. Math. \tmtextbf{176} (1937), 31--44.
\end{thebibliography}
\end{document}